\documentclass{amsart}

\usepackage[T1]{fontenc}
\usepackage{lmodern}
\usepackage{amsmath,amssymb,amsthm,mathtools}
\usepackage{xcolor}
\usepackage{hyperref}
\usepackage[nameinlink,capitalize]{cleveref}

\hypersetup{
  pdftitle={A Kovari--Sos--Turan theorem for stable hypergraphs},
  pdfauthor={Aris Papadopoulos},
  colorlinks=true,
  linkcolor=blue!45!black,
  citecolor=red!45!black,
  urlcolor=blue!45!black
}

\newtheorem{theorem}{Theorem}
\newtheorem{theoremA}{Theorem}

\newtheorem*{theorem*}{Theorem}
\newtheorem{proposition}[theorem]{Proposition}
\newtheorem{lemma}[theorem]{Lemma}
\newtheorem{corollary}[theorem]{Corollary}

\theoremstyle{definition}

\theoremstyle{remark}

\newcommand{\Kpart}[2]{K^{(#1)}_{#2,\ldots,#2}}

\title[K\H{o}v\'ari--S\'os--Tur\'an theorem for stable hypergraphs]{A note on the K\H{o}v\'ari--S\'os--Tur\'an theorem for stable hypergraphs}
\author{Aris Papadopoulos}
\thanks{\hspace{-1.4em}\textbf{Funding.} During part of the writing of this paper, the author was supported by Julia Wolf's Open Fellowship from the UK Engineering and Physical Sciences Research Council (EP/Z53352X/1).}
\address{DPMMS, Centre for Mathematical Sciences, Wilberforce Road, Cambridge CB3 0WB, United Kingdom}
\email{adp70@cam.ac.uk}
\begin{document}

\begin{abstract}
  In this note, we prove a stronger version of the K\H{o}v\'ari--S\'os--Tur\'an theorem for partition-wise $k$-stable $r$-hypergraphs. More precisely, we show that for every $k,r\in\mathbb{N}_{\geq 2}$ there is $\eta=\eta(r,k)>0$ such that if $H=(V;E)$ is a partition-wise $k$-stable $r$-uniform $K^{(r)}_{t,\ldots,t}$-free hypergraph with $|V|=n$, then $|E| = O_{r,k,t}(n^{r-\eta})$. Crucially, $\eta$ is independent of $t$. The proof follows the pseudofinite regime used by Chernikov and Starchenko to prove an analogous version of Erd\H{o}s-Hajnal for stable hypergraphs in \cite{ChernikovStarchenko2018}.
\end{abstract}

\maketitle

\section{Introduction}
\noindent Let $r,t\geq\in\mathbb{N}_{\geq 2}$.  An \emph{$r$-uniform hypergraph}, henceforth $r$-graph, is a pair $H=(V;E)$ where $E\subseteq \binom{V}{r}$, i.e. $E$ is a symmetric and irreflexive $r$-ary relation. We say that $H$ is \emph{$\Kpart{r}{t}$-free} if whenever $A_i\subseteq V$ with $|A_i| = t$ (for $i=1,\dots,r$), then $A_1\times\cdots\times A_r\nsubseteq E$. Let $H=(V,E)$ be an $r$-graph. A celebrated result of Erd\H{o}s \cite{Erdos1964} generalising the well-known K\H{o}v\'ari--S\'os--Tur\'an theorem, from graphs (i.e. $2$-graphs) to $r$-graphs gives $|E|=O_{r,t}\left(n^{\,r-\frac{1}{t^{r-1}}}\right)$, when $|V_i|=n$ and $H$ is $\Kpart rt$-free. 
  
In this note we show that if $E$ is also assumed to be \emph{partition-wise $k$-stable} (see \Cref{sec:preliminaries}), for some $k\in\mathbb{N}$, then (the fraction in) the exponent of Ed\H{o}s's bound above does not depend on $t$. More precisely, our main result is the following.

\begin{theoremA}\label{thm:intro}
  For every $r,k\geq\in\mathbb{N}_{\geq 2}$ there is $\eta=\eta(r,k)>0$ so that: For all $t\in\mathbb{N}_{\geq_2}$ there is $C=C(r,k,t)>0$ such that, if $H=(V;E)$ is a partition-wise $k$-stable $r$-graph with $|V|=n$, and $H$ is $\Kpart rt$-free, then $|E|\leq Cn^{r-\eta}$.
\end{theoremA}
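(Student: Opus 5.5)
The plan is to mimic the Chernikov--Starchenko approach to stable Erd\H{o}s--Hajnal. The key input is a \emph{stable regularity / density-increment lemma with polynomial bounds}. For a partition-wise $k$-stable $r$-graph on $V_1\times\cdots\times V_r$ and any $\varepsilon>0$, there is a partition of each $V_i$ into at most $\varepsilon^{-c}$ pieces, with $c=c(r,k)$ independent of everything else. Moreover, all but an $\varepsilon$-fraction of the boxes $P_1\times\cdots\times P_r$ have density $<\varepsilon$ or $>1-\varepsilon$. In the pseudofinite regime this is obtained as follows. One takes an ultraproduct of counterexamples and works with the Keisler measures $\mu_1,\dots,\mu_r$ and the Fubini property for stable formulas. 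For stable relations, $E$ is approximated in measure by Boolean combinations of boxes built from finitely many instances of the partial formulas $E(x_i;\bar y)$. Stability also controls the VC-dimension of each of these formulas polynomially in terms of $k$. The quantitative polynomial bound comes from the ``definable $\varepsilon$-approximation'' step: since the relevant formulas have VC-dimension $d=d(k)$, the number of pieces needed is $\varepsilon^{-O_{r,k}(1)}$. This is exactly the non-trivial part of the Chernikov--Starchenko argument, which I would transfer rather than reprove.

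Given such a lemma, the counting goes as follows. Let $H$ be $\Kpart rt$-free, partition-wise $k$-stable, on $n$ vertices, and pass to an $r$-partite version with parts of size $n$. Apply the lemma with $\varepsilon=n^{-\delta}$ for a small $\delta=\delta(r,k)$ to be chosen. The edges of $H$ then fall into three kinds of boxes.
\begin{itemize}
\item[(i)] Irregular boxes contribute at most $\varepsilon n^r$ edges.
\item[(ii)] Sparse boxes (density $<\varepsilon$) contribute at most $\varepsilon n^r$ edges.
\item[(iii)] Dense boxes (density $>1-\varepsilon$) are the remaining case.
\end{itemize}
For (iii), I claim such a box must be small. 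A box $P_1\times\cdots\times P_r$ with density $>1-\varepsilon$ and all $|P_i|\ge m$, where $m$ is large compared to $t$ and $\varepsilon$ is small compared to $t^{-r}$, contains a $\Kpart rt$. To see this, pick random $t$-subsets of each $P_i$: the expected number of missing edges among the $t^r$ tuples is at most $\varepsilon t^r<1$. Hence for $n$ large (so $\varepsilon=n^{-\delta}<t^{-r}/2$), every dense box has some $|P_i|<m_0(t)$, or more simply $|P_i|<t$ suitably interpreted. The total measure of boxes with some side of size $<t$ is at most $r\cdot(\#\text{pieces})\cdot t\cdot n^{r-1}\le r t\,\varepsilon^{-c} n^{r-1}$.

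Summing the three kinds gives
\[|E|\le 2n^{r-\delta}+rt\,n^{c\delta}n^{r-1}.\]
Choosing $\delta=1/(2c)$ gives $|E|\le C(r,k,t)\,n^{r-\eta}$ with $\eta=\min(\delta,1/2)$, which depends only on $(r,k)$. The factor $t$ only enters the constant. For small $n$, where $n^{-\delta}\ge t^{-r}/2$, absorb everything into $C$.

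The main obstacle is establishing the regularity lemma with the number of parts polynomial in $1/\varepsilon$ with exponent depending only on $(r,k)$, for hypergraphs rather than graphs. The pseudofinite compactness argument by itself only gives some bound, not a polynomial one. I would therefore first attempt the induction on $r$ used by Chernikov--Starchenko. One fixes a vertex $v\in V_r$ and considers the $(r-1)$-graph fibres $E_v$. Stability gives that the family $\{E_v\}$ has bounded VC-dimension in $V_1\times\cdots\times V_{r-1}$, so Haussler packing yields $\varepsilon^{-O(d)}$ representative fibres up to $\varepsilon$-measure. Applying the inductive hypothesis to each of them and taking common refinements then keeps the bounds polynomial.
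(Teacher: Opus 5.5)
\textbf{Verdict: there is a genuine gap.} Your reduction to the $r$-partite case and your counting are sound.

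\begin{itemize}
\item The random $t$-subset argument in (iii) is right. The expected number of non-edges in $S_1\times\cdots\times S_r$ is $|(P_1\times\cdots\times P_r)\setminus E|\prod_i t/|P_i|<\varepsilon t^r$, so only $|P_i|\ge t$ is needed.
\item The bound $rt\,\varepsilon^{-c}n^{r-1}$ for boxes with a short side is correct.
\item The choice $\delta=1/(2c)$, and absorbing small $n$ into $C$, are fine.
\end{itemize}

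So everything rests on the regularity lemma with $\varepsilon^{-c(r,k)}$ parts per side, and you have not established it.

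\begin{itemize}
\item Your first paragraph (Keisler measures, Fubini, definable approximation in an ultraproduct) gives no bound at all on the number of parts, as you concede.
\item The inductive sketch at the end breaks at its last step. Each of the $N=\varepsilon^{-O(d)}$ representative fibres $E_{v_1},\dots,E_{v_N}$ gets its own partition of $V_1,\dots,V_{r-1}$ into $\varepsilon^{-c_{r-1}}$ pieces. Their common refinement can have up to $\varepsilon^{-c_{r-1}N}$ pieces, which is exponential in a power of $1/\varepsilon$, not polynomial.
\item With $\varepsilon=n^{-\delta}$, this makes the term $rt\cdot(\text{number of pieces})\cdot n^{r-1}$ far larger than $n^r$, and the count says nothing.
\end{itemize}

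\textbf{How to repair it.} The lemma is true with polynomial bounds, but you should prove it without induction on $r$.
\begin{itemize}
\item For each $j$, the family $\{E_v:v\in V_j\}$ of subsets of $V_{\neq j}$ has VC-dimension $<k$. A shattered $k$-set immediately gives the $k$-order property for $E$ viewed as a relation between $V_j$ and $V_{\neq j}$, which is exactly what partition-wise $k$-stability forbids.
\item Haussler's packing lemma then partitions $V_j$ into $(O_k(1)/\varepsilon')^{k}$ classes whose fibres are pairwise $2\varepsilon'$-close in the uniform measure on $V_{\neq j}$.
\item Pick $x$ uniformly in $V_1\times\cdots\times V_r$ and, independently for each $i$, pick $y_i$ uniformly in the class of $x_i$. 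Change one coordinate at a time. At step $j$ the remaining coordinates $(y_{<j},x_{>j})$ are uniform on $V_{\neq j}$ and independent of $(x_j,y_j)$. Hence $\sum_B\mu(B)\,2p_B(1-p_B)=\Pr[\mathbf 1_E(x)\neq\mathbf 1_E(y)]\le 2r\varepsilon'$.
\item Markov's inequality with $\varepsilon'=\varepsilon^2/(2r)$ leaves all but $\varepsilon$ of the measure in $\varepsilon$-homogeneous boxes.
\end{itemize}
This is essentially the Fox--Pach--Suk ultra-strong regularity lemma for hypergraphs of bounded VC-dimension, which you could also simply cite.

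\textbf{Comparison with the paper.} With this repair your argument is complete, and it is genuinely different from the paper's.
\begin{itemize}
\item The paper takes an ultraproduct of counterexamples in which $E$ has full coarse dimension. It then proves a qualitative box lemma (\cref{prop:box}) by induction on $r$, using Shelah $2$-rank, definability of $\varphi$-types and \cref{lem:lower-rank-fibers}. As a result, $\eta$ is never made explicit.
\item Your repaired route is finitary and gives an explicit $\eta$, of order $1/k$. It also uses only bounded VC-dimension of each partition $V_j$ versus $V_{\neq j}$, not stability.
\end{itemize}
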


Our proof goes via Hrushovski's coarse pseudofinite dimension (which, for the sake of some self-containment, we recall in \cref{sec:pseudofinite}), and is very much qualitative, as it does not give an explicit value of $\eta(r,k)$. We remark here that since the edge relation of edge-stable graphs has finite VC-dimension, our result for $r=2$ is just a special case of \cite[Theorem~2.1]{semialgZar}. It is new however for $r\geq 3$. Let us also remark that it is very likely that quantitative proofs in the stable context may well exist, but our goal with this note was to treat the K\H{o}v\'ari--S\'os--Tur\'an theorem as a ``dimensional'' statement, so we purposefully did not go down the path of trying to count things more carefully. 

We are inspired partly by \cite[Proposition~3.1]{ChernikovStarchenko2018} and partly by \cite[Proposition~7.7]{EleftheriouPapadopoulos2025}. The former shows, using coarse pseudofinite dimension and local stability, that a partition-wise stable $r$-graph has a homogeneous set of positive dimension, while the latter shows that in a stable 1-based context, (under an additional non-algebraicity hypothesis) this homogeneous set must consist of hyperedges.  Unfortunately, neither one of these results can be applied directly here, but we remedy this with \cref{prop:box}, which strengthens the conclusion of \cite[Proposition~3.1]{ChernikovStarchenko2018} under a full dimensionality assumption (akin to the non-algebraicity hypothesis mentioned of \cite[Proposition~7.7]{EleftheriouPapadopoulos2025}), to construct a Cartesian product of positive dimension sets contained in the hyperedge relation. Having achieved this, the proof main theorem, is a relatively easy exercise (\emph{pose}) in pseudofinite yoga, given in full detail in \cref{sec:main-theorem}. 

\subsection*{Organisational matters} The rest of this note is organised as follows. In \cref{sec:preliminaries} we briefly recall the relevant tools from the local-stability and coarse pseudofinite dimension toolboxes and prove some (easy) preparatory results. In \cref{sec:full-box} we put these tools to work to prove our main technical proposition, discussed above, (\cref{prop:box}). Finally in \cref{sec:main-theorem} we spell out the  rest of the details of the proof of \cref{thm:intro}, which we deduce from the analogous partite statement, \cref{thm:main}.

\subsection*{Acknowledgments.}
I am extremely grateful to Amador Martin-Pizarro for the conversations, during the Leeds workshop on Tame Geometry and Combinatorics (March 2026), which inspired this note, and for suggesting the inclusion of \cref{cor:amador}. I am also very grateful to Rosario Mennuni and Mira Tartarotti for comments on earlier versions, and Ioannis Eleftheriadis for other comments. Finally, I want to thank Martin Bays and Atticus Stonestorm for helpful discussions.

\subsection*{AI Declaration.} The results in this paper are neither optimal, nor are they particularly difficult. They are however, in my humble opinion, pretty cool. I have little doubt that an LLM could get rather far into producing some version of my arguments, but where would the fun in that be... Anyway, no generative AIs were used during the preparation of this paper.

\section{Preliminaries}\label{sec:preliminaries}

\noindent We start by fixing some notation. For $r\in\mathbb{N}$ we write $[r] =\{1,\dots,r\}$. Given a cartesian product (sometimes called a box in the sequel) $V=\prod_{i=1}^r V_i$ of $r$-many sets and $j\in [r]$, we write $V_{\neq j} = \prod_{i\in[r]\setminus\{j\}}V_i$. Given sets $A$ and $B$, a binary relation $E\subseteq A\times B$ and $b\in B$ we write $E_b = \{a\in A:(a,b)\in E\}$, and more generally, if $B'\subseteq B$ write $E_{B'} =\{a\in A:\exists b\in B'\text{ s.t. }(a,b)\in E\}$, so that $E_{B'}=\bigcup_{b\in B} E_b$. Finally, for an $L$-formula $\phi(x;y)$ we write $\phi(x;y)^1$ for $\phi(x;y)$ and $\phi(x;y)^0$ for $\lnot\phi(x;y)$.

\subsection{Local stability} Let $k\in\mathbb{N}$. We say that a binary relation $E\subseteq V\times W$ has the \emph{$k$-order property} if there exist $v_1,\dots,v_k\in V$ and $w_1,\dots,w_k\in W$ such that for all $i,j\leq k$ we have that $(v_i,w_j)\in E$ if and only if $i\leq j$. We say that $E$ is \emph{$k$-stable} if it does not have the $k$-order property, and we say that it is \emph{stable} if it is $k$-stable, for some $k\in\mathbb{N}$.

 More generally, we call an $r$-ary relation $E\subseteq\prod_{i=1}^rV_i$ \emph{partition-wise ($k$-)stable} if for all $j\in[r]$, the relation $E$ viewed as a binary relation between $V_j$ and $V_{\neq j}$ is ($k$-)stable. We briefly remark here that partition-wise stability implies slice-wise stability, but not vice versa \cite{ChernikovTowsner}, and that Boolean combinations of partition-wise $k$-stable formulas (possibly with some variables replaced by parameters) are partition-wise $K$-stable, for some $K\in\mathbb{N}$, see \cite[Chapter~2]{Shelah1990}
 
The two main ingredients from local stability that we will use below are definability of ($\varphi$-)types and Shelah 2-rank. We will recall the latter and refer the reader to \cite[Lemma I.2.1]{Pillay} for a precise formulation of the former. 

Let $M\vDash T$ and $\varphi(x;y)$ a formula. Given $A\subseteq M^{x}$ and $r\in\mathbb{N}$ we define $R_\varphi(A)\geq r$ by induction, as follows: $R_\varphi(A)\geq 0$ if and only if $A$ is non-empty, and $R_\varphi(A)\geq r+1$ if and only if there is some $b\in M^{y}$ such that $R_\varphi(A\cap\phi(M;b)), R_\varphi(A\setminus\phi(M;b))\geq r$. For completeness, we set $R_\varphi(\emptyset)=-\infty$. Finally, for non-empty $A\subseteq M^x$ we set $R_\varphi(A) = \sup \{r\in\mathbb{N}:R_\varphi(A)\geq r\}$ if the supremum exists and $R_\varphi(A)=\infty$, otherwise. When $\varphi(x,y)$ is stable, $R_\varphi(A)\in\mathbb{N}$ for any $A\subseteq M^x$. For more precise statements see \cite[Chapter~II]{Shelah1990} or \cite[Section~3]{ChernikovNotes}.

\subsection{Coarse pseudofinite dimension}\label{sec:pseudofinite}
To quote Hrushovski, \cite{Hrushovski2013}: \emph{``The logarithm of the cardinality of a nonstandard finite set behaves like a dimension theory, as soon as one factors out a nontrivial convex subgroup of the nonstandard reals.'' }This wonderfully concise description of pseudofinite dimension contains essentially all the information that we recall in this subsection, lifted mainly from \cite[Section~2]{Hrushovski2013} and \cite{ChernikovNotes}, and included here mainly for completeness. 

For each $n\in\mathbb{N}$ let $M_n$ be a finite $L$-structure, with $|M_n|\to\infty$, as $n\to\infty$, and let $\mathcal{U}\in\beta\mathbb{N}\setminus\mathbb{N}$ be a non-principal ultrafilter. We write $M=\prod_{n\in\mathbb{N}}M_n/\mathcal{U}$ for the ultraproduct of the $M_i$, modulo $\mathcal{U}$. A subset $A\subseteq M^d$ is called \emph{internal} if $A = \prod_{n\in\mathbb{N}} A_i/\mathcal{U}$, where $A_n\subseteq M_n^d$, for all $n\in\mathbb{N}$. For an internal $A=\prod_{n\in\mathbb{N}}A_n/\mathcal{U}$, its \emph{nonstandard cardinality} is $|A|=\prod_{n\in\mathbb{N}}|A_n|/\mathcal{U}$.  

Let $V = \prod_{n\in\mathbb{N}} V_i/\mathcal{U}\subseteq M^d$ be a non-empty internal set and $X\subseteq V$ a non-empty internal subset. We define its \emph{coarse pseudofinite dimension}, $\delta(X)$, by taking the standard part of the logarithm of its cardinality, scaled by the logarithm of the cardinality of the ambient space, that is $\delta(X)=\mathsf{st}\!\left(\frac{\log|X|}{\log |V|}\right)$.

In \cref{lem:dimension-rules}, which is given below without proof, we record some basic properties of $\delta$ that will be used in the sequel, sometimes even with explicit mention. They are all either standard or trivial consequences of (nonstandard) cardinality and logarithms.

\begin{lemma}[Hrushovski, \cite{Hrushovski2013}]\label{lem:dimension-rules}
Let $M$ and $\delta$ be as above, and $X,Y\subseteq M$ be internal.
  \begin{enumerate}
  \item If $X\subseteq Y$, then $\delta(X)\leq\delta(Y)$. If, in addition, $|X|\geq |Y|/K$ for some (standard) $K \geq 1$, then $\delta(X)=\delta(Y)$.
  \item $\delta(X\cup Y)=\max\{\delta(X),\delta(Y)\}$. In particular, if $Y\subseteq X$ and $\delta(Y)<\delta(X)$, then $\delta(X\setminus Y)=\delta(X)$.
  \item If $X$ and $Y$ are nonempty, then $\delta(X\times Y)=\delta(X)+\delta(Y)$.
  \item If $0\leq\gamma<\delta(X)$, then there is an internal
    $X'\subseteq X$ with $\delta(X')=\gamma$.
  \end{enumerate}
\end{lemma}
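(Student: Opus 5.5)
We argue directly from the definition $\delta(X)=\mathsf{st}\!\left(\frac{\log|X|}{\log|V|}\right)$, where $|V|$ is a nonstandard integer that is infinite, since $|M_n|\to\infty$ and we may assume $|V_n|\to\infty$ along $\mathcal{U}$; hence $\log|V|$ is positive and infinite. Throughout, write $L=\log|V|$. The key observation is that multiplying or dividing a cardinality by a standard constant $K$ shifts its logarithm by the standard quantity $\log K$, which becomes infinitesimal after dividing by $L$. Everything reduces to this and to elementary inequalities for nonstandard reals, transferred from the finite levels via \L{}o\'s's theorem (each $|A|$ is an ultraproduct of finite cardinalities, so finite cardinality facts such as monotonicity, $|X\times Y|=|X||Y|$ and $|X\cup Y|\le|X|+|Y|$ hold internally).

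For (1), $X\subseteq Y$ gives $|X|\le|Y|$, so $\log|X|/L\le\log|Y|/L$, and taking standard parts preserves $\le$. If moreover $|X|\ge|Y|/K$, then $\log|Y|-\log K\le\log|X|\le\log|Y|$, so the two ratios differ by at most $\log K/L$, which is infinitesimal; hence their standard parts agree. For (2), we have $\max\{|X|,|Y|\}\le|X\cup Y|\le 2\max\{|X|,|Y|\}$. Applying (1) with $K=2$ to the larger of the two sets (which is contained in $X\cup Y$) gives $\delta(X\cup Y)=\max\{\delta(X),\delta(Y)\}$. For the ``in particular'' clause, write $X=Y\cup(X\setminus Y)$. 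Then $\delta(X)=\max\{\delta(Y),\delta(X\setminus Y)\}$, and since $\delta(Y)<\delta(X)$ the maximum must be attained by $\delta(X\setminus Y)$. For (3), $\log|X\times Y|=\log|X|+\log|Y|$, and the standard part is additive on finite nonstandard reals. Here we use that $\log|X|/L$ is finite: it lies in $[0,d]$ for $X\subseteq V^{d}$ or similar ambient products. This finiteness is exactly what one needs to keep track of when $X\times Y$ lives in a product ambient space.

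For (4), which I expect to be the only step requiring a construction, we choose $X'$ levelwise. Say $X=\prod_n X_n/\mathcal{U}$ with $\delta(X)>\gamma$. For each $n$, let $m_n=\min\{|X_n|,\lceil |V_n|^{\gamma}\rceil\}$ and pick any $X'_n\subseteq X_n$ with $|X'_n|=m_n$; set $X'=\prod_n X'_n/\mathcal{U}$, which is internal. Since $\delta(X)>\gamma$, for $\mathcal{U}$-almost all $n$ we have $|X_n|\ge|V_n|^{\gamma}$: otherwise the ratio $\log|X|/L$ would be at most $\gamma$, and so would its standard part. Thus $|X'|=\lceil|V|^{\gamma}\rceil$, which lies between $|V|^\gamma$ and $2|V|^\gamma$, the latter bound using that $|V|^\gamma\ge1$. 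Therefore $\log|X'|/L=\gamma+O(1/L)$, and so $\delta(X')=\gamma$. The edge case $\gamma=0$ is covered, since $X'$ is then a singleton and is nonempty, as $X$ is nonempty. The only subtlety is ensuring that the ambient normalisation $L$ is the same for all sets involved, which holds by the definition of $\delta$ relative to the fixed $V$.
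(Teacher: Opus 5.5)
Your proposal is correct and matches the paper's intended route: the paper states this lemma without proof, calling the items standard or trivial consequences of nonstandard cardinality and logarithms, and your argument (shifting logarithms by a standard $\log K$, which is infinitesimal after dividing by $L$, plus the levelwise choice $|X'_n|=\lceil |V_n|^{\gamma}\rceil$ for (4)) is exactly that. You also correctly make explicit the implicit hypotheses: $\log|V|$ must be infinite, since otherwise (1) and (4) fail, and all sets, including products, must be normalised by the same $L$.
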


\subsection{Set-up} Let us now discuss the the modest interplay between coarse pseudofinite dimension, local stability, and Shelah 2-rank needed for our purposes. \cref{lem:minimal-rank}, the small general result of the remainder of this section is just a rather elementary observation, and certainly well-known, but we give a proof, regardless. The goal of this subsection is to prove \cref{lem:lower-rank-fibers}, which will be the crucial ingredient in the induction step, in the proof of \cref{prop:box}.

Until the end of the section, the notation will be cumulative, and it will start with a fixed sequence of finite $L$-structures $(M_n:n\in\mathbb{N})$, with $|M_n|\to\infty$ as $n\to\infty$ and a fixed non-principal ultrafilter $\mathcal{U}\in\beta\mathbb{N}\setminus\mathbb{N}$. We let $M=\prod_{n\in\mathbb{N}}M_n/\mathcal{U}$. We also fix a stable formula $\varphi(x;y)$ and a non-empty internal subset $V\subseteq M^{x}$, with $\delta(V) = d$, for some (standard) $d\geq 0$. We let $r$ be the minimum value of $R_\varphi(A)$, as $A$ ranges over all internal subsets of $V$ with $\delta(A)=\delta(V)=d$. 

\begin{lemma}\label{lem:minimal-rank}
For each $A\subseteq V$ with $(\delta(A),R_\varphi(A)) = (d,r)$ there is a unique $\varphi$-type $p_A\in S^\varphi(M)$ such that for all $b\in M^{y}$ and all $\varepsilon\in\{0,1\}$ the following are equivalent:
\begin{enumerate}
  \item $\phi(x;b)^\varepsilon\in p_A$.
  \item $\delta(A\cap\phi(M;b)^\varepsilon) = d$ (and $\delta(A\cap\phi(M;b)^{1-\varepsilon})<d$).
  \item $R_\varphi(A\cap\phi(M;b)^\varepsilon)=r$ (and $R_\varphi(A\cap\phi(M;b)^{1-\varepsilon})<r$).
\end{enumerate}
Moreover, the set $D(p_A)= \{b\in M^{|y|}:\varphi(x;b)\in p_A\}$ is internal.
\end{lemma}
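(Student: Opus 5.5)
The plan is to define $p_A$ by condition (3), show that (2) and (3) agree using the minimality of $r$, and then get internality of $D(p_A)$ from (2). Throughout, $\delta(A)=d$ and $R_\varphi(A)=r$, and $r$ is minimal among internal subsets of $V$ of dimension $d$. Note that $A\cap\varphi(M;b)^\varepsilon$ is internal, since $\varphi(M;b)$ is internal in the ultraproduct.

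\emph{Step 1: (2) and (3) agree.} Fix $b$ and write $A^\varepsilon=A\cap\varphi(M;b)^\varepsilon$. By \cref{lem:dimension-rules}(2), $\max\{\delta(A^0),\delta(A^1)\}=d$, so at least one piece has dimension $d$. Any piece of dimension $d$ has rank at least $r$ by minimality, and at most $r$ since it is a subset of $A$; hence it has rank exactly $r$.

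Both pieces cannot have dimension $d$: then both would have rank $r$, and the definition of $R_\varphi$ would give $R_\varphi(A)\geq r+1$, a contradiction. So exactly one $\varepsilon$ has $\delta(A^\varepsilon)=d$, and then $\delta(A^{1-\varepsilon})<d$. That $A^\varepsilon$ has rank $r$. The other piece $A^{1-\varepsilon}$ has rank strictly less than $r$, since otherwise both pieces would have rank $\geq r$ and again $R_\varphi(A)\geq r+1$. Thus (2) implies (3). Conversely, the same argument shows the piece of rank $r$ is unique and is the one of full dimension, so (3) implies (2).

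\emph{Step 2: $p_A$ is a consistent complete $\varphi$-type.} Define $p_A=\{\varphi(x;b)^\varepsilon : \delta(A\cap\varphi(M;b)^\varepsilon)=d\}$. By Step 1, for each $b$ exactly one of $\varphi(x;b)$, $\lnot\varphi(x;b)$ belongs to $p_A$.

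For consistency, take finitely many formulas $\varphi(x;b_i)^{\varepsilon_i}\in p_A$. Each complement $A\cap\varphi(M;b_i)^{1-\varepsilon_i}$ has dimension $<d$. A finite union of these still has dimension $<d$ by \cref{lem:dimension-rules}(2). So the intersection $A\cap\bigcap_i\varphi(M;b_i)^{\varepsilon_i}$ has dimension $d$, and in particular is nonempty; it is realised in $M$.

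Uniqueness is immediate, because conditions (1)--(3) determine membership of every $\varphi$-formula over $M$.

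\emph{Step 3: $D(p_A)$ is internal.} This is the main point. A first idea is definability of $\varphi$-types \cite[Lemma I.2.1]{Pillay}: $D(p_A)$ is given by a positive Boolean combination of formulas $\varphi(a_j;y)$, hence is definable in $M$ and so internal. Strictly speaking, that lemma applies to types over models, and $M$ is a model, so this route works.

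A more self-contained argument uses the dimension characterisation (2). Write $A=\prod A_n/\mathcal U$ and $d=\mathsf{st}(\log|A|/\log|V|)$. By Step 1, $\delta(A\cap\varphi(M;b))$ is either $d$ or strictly below $d$, and $\delta(A\cap\lnot\varphi(M;b))$ is then the opposite. So it suffices to compare the sizes of the two pieces: I claim $b\in D(p_A)$ if and only if $|A\cap\varphi(M;b)|\geq|A|/2$.

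If $b\in D(p_A)$, then $\delta(A\cap\lnot\varphi(M;b))<d$, so $|A\cap\lnot\varphi(M;b)|/|A|$ is infinitesimal, and the inequality holds. If $b\notin D(p_A)$, the same reasoning applied to $\lnot\varphi$ shows $|A\cap\varphi(M;b)|/|A|$ is infinitesimal, so the inequality fails.

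The condition $|A\cap\varphi(M;b)|\geq|A|/2$ is computed coordinatewise. Hence $D(p_A)=\prod_n\{b:|A_n\cap\varphi(M_n;b)|\geq|A_n|/2\}/\mathcal U$ is internal. The only care needed is that this coordinatewise condition matches the nonstandard cardinality comparison, which it does by Łoś.
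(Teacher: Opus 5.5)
Your proof is correct. Steps 1 and 2 follow the paper's argument closely: minimality of $r$ forces every full-dimensional piece to have rank exactly $r$, two such pieces would give $R_\varphi(A)\geq r+1$, and finite consistency holds because finitely many pieces of dimension $<d$ cannot exhaust $A$. Your first route in Step 3, definability of $\varphi$-types over the model $M$, is exactly how the paper obtains the ``moreover'' clause.

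Your second, majority-vote argument is a genuinely different and valid route. By the contrapositive of \cref{lem:dimension-rules}(1) with $K=2$, a subset of $A$ of dimension $<d$ has fewer than $|A|/2$ elements. So the dichotomy of Step 1 gives $D(p_A)=\prod_n\{b:|A_n\cap\varphi(M_n;b)|\geq|A_n|/2\}/\mathcal{U}$, which is internal by \L o\'s. This is more elementary: within the lemma, stability is then used only to guarantee $r<\infty$. It also describes $D(p_A)$ explicitly as an ultraproduct of finite sets.

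What it loses is the shape of the definition. In the proof of \cref{prop:box}, the paper needs $D(p_A)$ to be defined by a Boolean combination of instances $\varphi(a_j;y)$. That is what makes $D=D(p_A)\cap Y$ partition-wise stable, so that the induction hypothesis applies. Without appealing to definability of types, the majority-vote description gives no evident reason for this. So either route suffices for the lemma as stated, but the definability route is the one the rest of the paper relies on.
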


\begin{proof}
  Let $A\subseteq V$ be as in the statement of the lemma and suppose that $B\subseteq A$ has $\delta(B)=d$. Clearly $R_\varphi(B)\geq r$, since $r$ was chosen to be minimum, so $R_\varphi(B) = r$, since $B\subseteq A$. Given some $b\in M^y$, write $B_\varepsilon = A\cap\phi(M;b)^\varepsilon$, for $\varepsilon\in\{0,1\}$. Clearly, for at least one $\varepsilon\in \{0,1\}$ we have that $\delta(B_\varepsilon) = d$, and thus, by the above $R_\varphi(B_\varepsilon) = r$. If we also had $\delta(B_{1-\varepsilon}) = d$, then arguing as in the last sentence $R_\varphi(B_{1-\varepsilon}) = r$, but then $R_\varphi(A) \geq r+1$, hence $\delta(B_{1-\varepsilon}) <d$. In fact, this shows that $R_\varphi(B_{1-\varepsilon})<r$, in general. Let $p_A = \{\phi(x;b)^{\varepsilon(b)}:b\in M^y\}$. That $p_A$ is unique, maximal, and satisfies $(1)\Leftrightarrow(2)\Leftrightarrow(3)$ is by construction, so it remains to show that it is finitely consistent. Let $\varphi(x,b_1)^{\varepsilon(b_1)},\dots,\varphi(x,b_n)^{\varepsilon(b_n)}\in p_A$. Then, $\delta(A\cap\varphi(x,b_i)^{1-\varepsilon(b_i)})<d$ for each $i\in[n]$. Write $C=A\cap \left(\bigcap_{i=1}^n\varphi(x;b_i)^{\varepsilon(b_i)}\right)$. A straightforward dimension argument
   then shows that $\delta(C) = d\geq 0$, and thus $p_A$ is consistent (and actually finitely satisfiable in $A$). The moreover part follows from definability of $\varphi$-types, for stable $\varphi$, and the fact that $M$-definable sets are internal.
\end{proof}

Continuing with the notation already established, fix some non-empty internal set $B$ and some non-empty internal $E\subseteq A\times B$ such that for all $b\in B$ we have that $R_\varphi(E_b)<r$. Let $b_\star\in B$ be so that for all $b\in B$ we have that $|E_b|\leq |E_{b_\star}|$, and note that by our rank assumption and \cref{lem:minimal-rank} we have that $\delta(E_{b_\star})<d$. 

\begin{corollary}\label{lem:lower-rank-fibers}
  In the notation above, for any non-empty internal $B'\subseteq B$ we have $\delta(E_{B'})\leq \delta(E\cap (A\times B'))\leq \delta(B')+\delta(E_{b_\star})$. In particular, if $\delta(B')+\delta(E_{b_\star})<d$, then $\delta(A\setminus E_{B'})=d$.
\end{corollary}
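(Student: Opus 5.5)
The plan is to compare cardinalities and take logarithms, using only the basic rules in \cref{lem:dimension-rules}. The first inequality is immediate. The projection of $E\cap(A\times B')$ to the first coordinate is exactly $E_{B'}$, so $|E_{B'}|\leq |E\cap(A\times B')|$ as nonstandard cardinalities. Both sets are internal: for each $n$ the projection of the $n$-th coordinate set is the $n$-th coordinate of the projection. So \L{}o\'s gives the cardinality inequality, and monotonicity of $\log$ and of the standard part gives $\delta(E_{B'})\leq\delta(E\cap(A\times B'))$. Strictly, \cref{lem:dimension-rules}(1) is phrased for subsets, so I would argue directly with cardinalities, since $\delta$ is defined via $\log|X|/\log|V|$.

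For the second inequality, I would write $E\cap(A\times B')=\bigcup_{b\in B'}E_b\times\{b\}$, a disjoint union. Hence
\[
|E\cap(A\times B')|=\sum_{b\in B'}|E_b|\leq |B'|\cdot|E_{b_\star}|.
\]
This holds coordinatewise in each $M_n$, where $b_\star$ is represented by a maximiser $b_{\star,n}$ of $|E_{b,n}|$ over $b\in B_n$. That representative is what makes $b_\star$ a genuine maximiser in the ultraproduct, and the maximality is preserved by \L{}o\'s. Thus $|E\cap(A\times B')|\leq |B'\times E_{b_\star}|$. Taking logs, dividing by $\log|V|$ and taking standard parts, together with \cref{lem:dimension-rules}(3), gives $\delta(E\cap(A\times B'))\leq \delta(B')+\delta(E_{b_\star})$. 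If $E_{b_\star}$ is empty, then every fibre is empty and the statement is trivial, with the convention $\delta(\emptyset)=-\infty$. The only point needing a little care is this bookkeeping of internality and of the standard part with respect to the ambient normalisation. The factor $V$ versus $V\times B$ is harmless: we only compare quantities normalised by the same $\log|V|$, and additivity (3) is stated in that normalisation.

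For the ``in particular'', suppose $\delta(B')+\delta(E_{b_\star})<d$. Then $\delta(E_{B'})<d=\delta(A)$, where $\delta(A)=d$ by the standing assumption $(\delta(A),R_\varphi(A))=(d,r)$. We have $E_{B'}\subseteq A$ because $E\subseteq A\times B$. By \cref{lem:dimension-rules}(2), $\delta(A)=\max\{\delta(E_{B'}),\delta(A\setminus E_{B'})\}$, so $\delta(A\setminus E_{B'})=d$.
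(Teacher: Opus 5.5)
Your proposal is correct and follows the paper's proof. The paper likewise bounds $|E_n\cap(A_n\times B_n')|=\sum_{b\in B_n'}|(E_n)_b|\leq|B_n'|\,|(E_n)_{b_\star}|$ coordinatewise and takes logarithms, leaving implicit both your projection argument for the first inequality and your use of \cref{lem:dimension-rules}(2) for the ``in particular''. Your point that $b_\star$ can be taken coordinatewise maximal ($\mathcal{U}$-almost everywhere) is exactly what justifies the paper's phrase ``by our choice of $b_\star$''.
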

\begin{proof}
  Let $B' = \prod_{n\in\mathbb{N}} B_n'/\mathcal{U}$, with $\emptyset\neq B'_n\subseteq B_n$ for all $n\in\mathbb{N}$. By our choice of $b_\star$, for all $n\in\mathbb{N}$ we have $|E_n\cap(A_n\times B_n')| = \sum_{b\in B_n'}|(E_n)_b|\leq |B_n'||(E_n)_{b_\star}|$. Thus the claimed inequality follows by taking logarithms. The ``in particular'' part is immediate. 
\end{proof}

\section{The full-dimension box lemma}\label{sec:full-box}
The main technical tool of this paper is \cref{prop:box}, which is proved in this section. The proof is by induction on the arity, and will at the crucial point construct a set satisfying the assumptions discussed just before \cref{lem:lower-rank-fibers}. Namely, we will build an internal set $D$ of full dimension, one arity below, use induction to extract a box, and  show that by construction all fibres of the complement of the hyperedge relation that go through our box have smaller rank. This will guarantee, by the aforementioned corollary that the appropriate complement has positive dimension, and allow the induction to go through. The proof will hopefully be more enlightening.

\begin{proposition}\label{prop:box}
  Let $M$ be as above. Let $V_1,\ldots,V_r\subseteq M$ be nonempty internal subsets with $\delta(V_i)=d_i>0$, for each $i=1,\dots,r$. Let $E\subseteq V_1\times\cdots\times V_r$ be internal and partition-wise stable. If $\delta(E)=\sum_{i=1}^rd_i$, then there are internal $B_i\subseteq V_i$ such that $\delta(B_i)>0$ for each $i=1,\dots,r$ and $B_1\times\cdots\times B_r\subseteq E$.
\end{proposition}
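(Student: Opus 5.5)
We argue by induction on $r$. For $r=1$ the statement is trivial: $E\subseteq V_1$ with $\delta(E)=d_1>0$, so take $B_1=E$. For the induction step, suppose $r\geq 2$ and view $E$ as a binary relation between $V_r$ (variable $x$) and $V_{\neq r}$ (variable $y$). Let $\varphi(x;y)$ be the corresponding stable formula; it is stable by partition-wise stability, after adding $E$ to the language if necessary. Let $\rho$ be the minimal $\varphi$-rank of a full-dimensional internal subset of $V_r$. Fix $A\subseteq V_r$ with $(\delta(A),R_\varphi(A))=(d_r,\rho)$, and let $p_A$ be the type given by \cref{lem:minimal-rank}. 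We then set
\[
D=\{b\in V_{\neq r}:\varphi(x;b)\in p_A\},
\]
which is internal by the moreover part of \cref{lem:minimal-rank}.

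\textbf{Step 1: $D$ has full dimension $\sum_{i<r}d_i$.} Restricting $E$ to $A\times V_{\neq r}$ keeps full dimension by \cref{lem:dimension-rules}, since $\delta(A)=d_r$. For $b\notin D$ we have $\delta(E_b\cap A)<d_r$ by \cref{lem:minimal-rank}. We count $|E\cap (A\times V_{\neq r})|$ by splitting over $b\in D$ and $b\notin D$. Let $b_\star\notin D$ maximise $|E_b\cap A|$. The part over $V_{\neq r}\setminus D$ then has dimension at most $\sum_{i<r}d_i+\delta(E_{b_\star}\cap A)$, which is less than the full dimension. Hence the part over $D$ must be full-dimensional, which forces $\delta(D)=\sum_{i<r}d_i$.

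\textbf{Step 2: apply the induction hypothesis.} The set $D\subseteq V_1\times\cdots\times V_{r-1}$ is defined by a partition-wise stable relation, because it is obtained from $E$ via a $\varphi$-definition, that is, a Boolean combination of instances of $E$ with parameters. Applying the inductive hypothesis to $D$ gives internal $B_i\subseteq V_i$ with $\delta(B_i)>0$ for $i<r$ and $B'=\prod_{i<r}B_i\subseteq D$. After shrinking the $B_i$ using \cref{lem:dimension-rules}(4), we may assume $\delta(B')$ is as small and positive as we like.

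\textbf{Step 3: find $B_r$.} Consider the complement relation $N=(A\times B')\setminus E$. For each $b\in B'\subseteq D$ we have $\varphi(x;b)\in p_A$, so by \cref{lem:minimal-rank} the fibre $N_b=A\setminus E_b$ has rank $<\rho$, and therefore $\delta(N_b)<d_r$. Let $b_\star$ maximise $|N_b|$. We want to apply \cref{lem:lower-rank-fibers} with $B'$ to get $\delta(A\setminus N_{B'})=d_r$, and then set $B_r=A\setminus N_{B'}$. Then $B_r\times B'\subseteq E$ and $\delta(B_r)=d_r>0$.

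\textbf{Main obstacle.} The main difficulty is the circularity in Step 3. \cref{lem:lower-rank-fibers} requires $\delta(B')+\delta(N_{b_\star})<d_r$, but $b_\star$ depends on $B'$, and shrinking $B'$ can only lower $\delta(N_{b_\star})$. To break the circularity, I would first fix $b_\star$ relative to all of $D$, by maximising $|A\setminus E_b|$ over $b\in D$. Then $\gamma=\delta(N_{b_\star})<d_r$ is determined before $B'$ is chosen, and we shrink the boxes $B_i$ so that $\delta(B')<d_r-\gamma$.

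A further subtlety is whether this maximum over $D$ really has dimension below $d_r$. This holds because each individual fibre has rank $<\rho$, and a maximiser is still some $b\in D$. So it suffices to apply \cref{lem:lower-rank-fibers} with $B=D$ and the complement relation.
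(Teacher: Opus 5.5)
Your Steps 2 and 3 are sound. Your way of breaking the circularity, fixing $b_\star$ over all of $D$ before shrinking, is the same device the paper uses; the paper maximises over the inductive box before shrinking to $B'$. The gap is in Step 1, in the sentence claiming that restricting $E$ to $A\times V_{\neq r}$ keeps full dimension by \cref{lem:dimension-rules} because $\delta(A)=d_r$. No dimension rule gives this, since a full-dimensional $A$ can carry almost none of the edges. Concretely, split $V_r=P\sqcup Q$ with $|P|=|Q|$ and take $E=P\times V_{\neq r}$. This $E$ is partition-wise stable and full-dimensional. Moreover $Q$ is full-dimensional with $R_\varphi(Q)=0$, because every instance $\varphi(M;b)$ is $P$ or $\emptyset$. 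So $Q$ is an admissible choice of $A$ in your argument. But then $p_A$ contains $\lnot\varphi(x;b)$ for every $b$, so $D=\emptyset$ and the conclusion of Step 1 fails. Your counting only shows that the part of $E\cap(A\times V_{\neq r})$ lying over the complement of $D$ is small, which is useless if $E\cap(A\times V_{\neq r})$ was small to begin with.

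The missing ingredient is a degree regularisation before choosing $A$. Write $X=V_r$ and $Y=V_{\neq r}$; the paper does the same with $V_1$ split off, which is immaterial. The paper first replaces the ambient vertex set by the internal set $X'\subseteq X$ of vertices whose degree is at least half the average. At level $n$ these are the $v\in X_n$ with at least $|E_n|/(2|X_n|)$ neighbours in $Y_n$. Vertices outside $X'$ carry at most half the edges, so $\delta(X')=\delta(X)$. More importantly, every internal full-dimensional $A\subseteq X'$ satisfies $\delta(E\cap(A\times Y))=\delta(E)$, because $|E_n\cap(A_n\times Y_n)|\geq|A_n|\,|E_n|/(2|X_n|)$. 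One then takes $A$ of minimal $\varphi$-rank among the full-dimensional internal subsets of $X'$, and applies \cref{lem:minimal-rank} with ambient set $X'$.

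You cannot repair Step 1 by merely minimising the rank over those $A\subseteq V_r$ for which $E\cap(A\times V_{\neq r})$ is full-dimensional. The proof of \cref{lem:minimal-rank} uses that \emph{every} full-dimensional subset of $A$ has rank at least $R_\varphi(A)$, i.e.\ minimality over all full-dimensional subsets of the ambient set. So you need an ambient set all of whose full-dimensional subsets carry full-dimensional edge sets, which is exactly what $X'$ provides. With this change your Step 1 becomes the paper's Claim 2, and the rest of your argument goes through.
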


\begin{proof}
  The proof is by induction on $r$, the base case being trivial. So, suppose $r>1$, and let $V_1,\dots,V_r$ and $E$ be as above. To facilitate notation, let $X=V_1$, $Y=V_2\times\cdots\times V_r$ and $\alpha=\delta(X)$, $\beta=\delta(Y) = \sum_{i=2}^rd_i$. 
  
  Take $X=\prod_{n\in\mathbb{N}}X_n/\mathcal{U}$, $Y=\prod_{n\mathbb{N}}Y_n/\mathcal{U}$, and $E=\prod_{n\in\mathbb{N}}E_n$, where, without loss of generality, $X_n,Y_n,E_n$ are nonempty and $E_n\subseteq X_n\times Y_n$ for each $n\in\mathbb{N}$. For $v\in X_n$, set $e_n(v)=|\{w\in Y_n:(v,w)\in E_n\}|$ and $X_n'=\left\{v\in X_n:
      e_n(v)\geq\frac{|E_n|}{2|X_n|}\right\}$. Take $X'=\prod_{n\in\mathbb{N}}X'_m/\mathcal{U}$. \\\vspace{-0.8em}
      
\noindent\textit{Claim 1.} $\delta(X')=\alpha$. \\\vspace{-0.8em}

\noindent \textit{Proof of Claim 1.} By an elementary counting argument, for all $n\in\mathbb{N}$, we have that $|E_n\setminus(X'_n\times Y_n)| =\sum_{v\in X_n\setminus X'_n}e_n(x)\leq|X_n|\frac{|E_n|}{2|X_n|}=\frac{|E_n|}{2}$ and, in particular, $|E_n\cap(X'_n\times Y_n)|\geq\frac{|E_n|}{2}$. So, we have $\delta(E\cap(X'\times Y))=\delta(E) = \alpha+\beta$.  Since this relation is contained in $X'\times Y$, we have $\alpha+\beta\leq\delta(X')+\beta$, so $\alpha\leq \delta(X')$. On the other hand, $\delta(X')\leq\alpha$, since $X'\subseteq X$. \hfill$\qed_{\textit{Claim 1}}$ \\\vspace{-0.8em}

\noindent More generally, and by a similar counting argument we have:\vspace{-0.45em}
\begin{center}
  $(\star)$  For all internal $A\subseteq X'$ with $\delta(A)=\alpha$ we have  $\delta(E\cap(A\times Y)) = \alpha+\beta$.\vspace{-0.45em}
\end{center}
Indeed, given $A\subseteq X'$ as in $(\star)$, for all $n\in \mathbb{N}$ we have $|E_n\cap(A_n\times Y_n)| =\sum_{x\in A_n}e_n(x) \geq |A_n|\frac{|E_n|}{2|X_n|}$. Thus, $\delta\bigl(E\cap(A\times Y)\bigr) \geq\delta(A)+\delta(E)-\delta(X)=\alpha+\beta$, and the other inequality is trivial. 
  

 Since $E$ is internal, without loss of generality, after expanding the language we may assume that it is definable, say by $\varphi(x;y)$. Choose an internal subset $A=\prod_{n\in\mathbb{N}}A_n/\mathcal{U}\subseteq X'$ such that $\delta(A)=\alpha$ and $R_\varphi(A)$ is minimal amongst the internal subsets of $X'$ of dimension $\alpha$. 
%
By $(\star)$ we have $\delta(E\cap (A\times Y)) = \alpha+\beta$. In particular, $A$ meets the assumptions of \Cref{lem:minimal-rank}.
 
Our goal now is to find some partition-wise stable internal subset $D\subseteq Y$ with $\delta(D) = \beta$, so that our inductive hypothesis can be applied. To this end, by applying \cref{lem:minimal-rank} to $A$, we obtain a complete $\varphi$-type $p_A$ and an internal set $D(p_A)$, satisfying the conclusion of that lemma. Set $D=D(p_A)\cap Y$. Observe that $D$ is internal and stable since by definability of $\varphi$-types, the formula defining $D(p_A)$ is a Boolean combination of instances of a partition-wise stable formula, and thus remains partition-wise stable. \\\vspace{-0.8em}
  
\noindent\textit{Claim 2.}  $\delta(D)=\beta$ \\\vspace{-0.8em}

\noindent\textit{Proof of Claim 2.} Suppose towards contradiction that $\delta(D)<\beta$, so $\delta(Y\setminus D)=\beta$. To fix ideas, write $E_0=E\cap(A\times D)$ and $E_1=E\cap\bigl(A\times(Y\setminus D)\bigr)$, so that $E\cap(A\times Y)=E_0\sqcup E_1$. Clearly, $\delta(E_0) \leq\delta(A\times D)=\alpha+\delta(D)<\alpha+\beta$. Since $\delta\bigl(E\cap(A\times Y)\bigr)=\alpha+\beta$, it follows that $\delta(E_1)=\alpha+\beta$. By construction, for each $w\in Y\setminus D$, the fibre $(E_1)_w=A\cap E_w\notin p_A$, and thus $R((E_1)_w)<R_\varphi(A)$. Let $w_\star\in Y\setminus D$ be such that for all $w\in Y\setminus D$ we have that $|(E_1)_w|\leq |(E_1)_{w_\star}|$ and let $q=\delta((E_1)_{w_\star})$, and note that $q<\alpha$, by the discussion before \cref{lem:lower-rank-fibers}. Now, by \cref{lem:lower-rank-fibers} (applied to $Y\setminus D$) we have that $\delta(E_1)\leq\delta(Y\setminus D)+q =\beta+q<\alpha+\beta$, a contradiction. \hfill$\qed_{\textit{Claim 2}}$ \\\vspace{-0.8em}

Thus, we may apply our induction hypothesis to $D$, to obtain internal sets $B_i\subseteq V_i$ for $i=2,\dots,r$ with $\delta(B_i)> 0$ and such that $B = \prod_{i=2}^r B_i \subseteq D$. Let $E' = \{(a,b)\in A\times B: (a,b)\notin E\}$. By definition, for each $b\in B$ we have that $E'_b = A\setminus E_b$. \\\vspace{-0.8em}

\noindent\textit{Claim 3.}  For all $b\in B$ we have that $R_\varphi(E_b')<R_\varphi(A)$ \\\vspace{-0.8em}

\noindent\textit{Proof of Claim 3.} Immediate, after unpacking the definitions.
 \hfill$\qed_{\textit{Claim 3}}$ \\\vspace{-0.8em}

If $E'=\emptyset$, then $A\times B \subseteq E$, and since $\delta(A)>0$ we are done. Thus, we may assume that $E'\neq \emptyset$. Let $b_\star\in B$ be such that for all $b\in B$ we have that $|E'_b|\leq |E'_{b_\star}|$. By Claim 3, if we set $q = \delta(E'_{b_\star})$, then we have that $q<\alpha$ (see the discussion above \cref{lem:lower-rank-fibers}).

For $i=2,\dots,r$, let $\gamma_i=\min\!\left\{\frac{\delta(B_i)}{2},\frac{\alpha-q}{2(r-1)}\right\}$, so that $0<\gamma_i<\delta(B_i)$ for each $i=2,\dots,r$, and $\sum_{i=2}^r\gamma_i<\alpha-q$.  By \cref{lem:dimension-rules}(4) for each $i=2,\dots,r$ we can find internal $B_i'\subseteq B_i$ such that $\delta(B_i') = \gamma_i$. Letting $B'=\prod_{i=2}^rB'_i$ we have $\delta(B')=\sum_{i=2}^r\gamma_i<\alpha-q$. So, by the ``in particular'' part of \cref{lem:lower-rank-fibers}, applied to $B'$, we have that $\delta(A\setminus\bigcup_{b\in B'}E_b')=\alpha>0$ (justifying the choice of the $\gamma_i$). Thus, taking $B_1=A\setminus\bigcup_{b\in B'}E_b'$, and $B_i = B_i'$ for $i=2,\dots,r$ gives the result.
\end{proof}

We can immediately deduce the partite analogue of \cite[Proposition~3.1]{ChernikovStarchenko2018}, which as remarked in the introduction does not immediately imply \Cref{prop:box}.

\begin{corollary}\label{cor:amador}
 Let $M,V_1,\dots,V_r$ be as in \cref{prop:box}, and let $E\subseteq V_1\times\cdots\times V_r$ be internal partition-wise stable. Then, there exist $B_i\subseteq V_i$ with $\delta(B_i)>0$, for $i\in[r]$ such that $B_1\times\cdots\times B_r\subseteq E$ or $B_1\times\cdots\times B_r\cap E=\emptyset$
\end{corollary}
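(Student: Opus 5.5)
The plan is to reduce the corollary to \cref{prop:box} by a dichotomy on dimension. Put $d_i=\delta(V_i)$ and $E^c=(V_1\times\cdots\times V_r)\setminus E$. Then $E^c$ is internal. It is also partition-wise stable, since for each $j\in[r]$ the order property for $E^c$, viewed as a binary relation between $V_j$ and $V_{\neq j}$, gives an order property for $E$ after reversing the order of the witnesses. Thus $E^c$ is $k$-stable whenever $E$ is. This is also a special case of the Boolean-combination remark in \cref{sec:preliminaries}. By \cref{lem:dimension-rules}(3), $\delta(V_1\times\cdots\times V_r)=\sum_{i=1}^r d_i$.

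By \cref{lem:dimension-rules}(2), $\delta(E\cup E^c)=\max\{\delta(E),\delta(E^c)\}$, and this maximum equals $\sum_i d_i$. Hence at least one of $E$ and $E^c$ has full dimension $\sum_i d_i$. If $\delta(E)=\sum_i d_i$, apply \cref{prop:box} to $E$ to get internal $B_i\subseteq V_i$ with $\delta(B_i)>0$ and $B_1\times\cdots\times B_r\subseteq E$. Otherwise $\delta(E^c)=\sum_i d_i$, and \cref{prop:box} applied to $E^c$ gives a box of positive-dimensional internal sets contained in $E^c$, that is, $(B_1\times\cdots\times B_r)\cap E=\emptyset$. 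One degenerate case needs care: if $E$ or $E^c$ is empty, $\delta$ is undefined for it, but then the other set is the whole product and the conclusion holds with $B_i=V_i$.

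The only genuine point to verify is that \cref{prop:box} applies to $E^c$. This needs $E^c$ to be internal, which holds because complements of internal sets are internal (coordinatewise complements), and partition-wise stable, which was checked above. The stability hypothesis is used in \cref{prop:box} only through the rank $R_\varphi$ and definability of $\varphi$-types, where $\varphi$ defines $E^c$. Negating $\varphi$ changes neither the rank nor definability, so there is no real obstacle. The argument is a straightforward dichotomy, and all of the substance sits in \cref{prop:box}.
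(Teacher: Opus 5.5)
Your proof is correct and is the paper's argument: one of $E$ and its complement has dimension $\sum_i d_i$, and \cref{prop:box} applies to that set. Your checks on internality, stability of the complement and the empty case fill in what the paper leaves implicit. One harmless slip: reversing witnesses only turns a $(k+1)$-order property for $E^c$ into a $k$-order property for $E$. For instance, the single edge $\{(b,c)\}$ in $\{a,b\}\times\{c,d\}$ is $2$-stable but its complement is not. So $E^c$ is $(k+1)$-stable rather than $k$-stable, which suffices since \cref{prop:box} only needs stability for some $k$.
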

\begin{proof}
  Either $\delta(E) = \sum_{i=1}^r d_i$ or $\delta(\lnot E) = \sum_{i=1}^r d_i$, and in either case the result follows from applying \Cref{prop:box}.
\end{proof}

\section{Proof of the main theorem}\label{sec:main-theorem}

\noindent Our main theorem is an immediate corollary of the following result (which is the partition-wise stable hypergraph version of \emph{Zarankiewicz's problem}). Recall that an $r$-graph $H=(V,E)$ is called \emph{$r$-partite} if there is a partition $V=\bigsqcup_{i=1}^r V_i$ such that for all $i\in[r]$ and all $e\in E$ we have that $|e\cap V_i|=1$. 

\begin{theorem}\label{thm:main}
  For every $r,k\geq\in\mathbb{N}_{\geq 2}$ there is $\eta=\eta(r,k)>0$ such that for all $t\in\mathbb{N}_{\geq_2}$ there is $C=C(r,k,t)>0$ such that, if $H=(V_1,\ldots,V_r;E)$ is a partition-wise $k$-stable $r$-partite $r$-graph with $|V_1|=\cdots=|V_r|=n$, and $H$ is $\Kpart rt$-free, then $|E|\leq Cn^{r-\eta}$.
\end{theorem}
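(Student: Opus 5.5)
We argue by contradiction and compactness, using \cref{prop:box}. Fix $r,k$ and suppose no $\eta>0$ works. Then for every $m\in\mathbb{N}$ there is some $t_m$ such that no constant $C$ works for $\eta=1/m$ and $t=t_m$. The dependence on $t$ is the main obstacle: an ultraproduct in which $t_m\to\infty$ is useless, since in the limit we would have no Kov\'ari--S\'os--Tur\'an-freeness to contradict. I would therefore first fix the order of quantifiers so that the limit is taken for fixed $t$, and only then pass to pseudofinite dimension.

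The plan is to show the stronger statement that there is $\eta(r,k)>0$ such that, for every $t$, one cannot have a sequence of partition-wise $k$-stable $\Kpart rt$-free $r$-partite $H_n$ on parts of size $N_n\to\infty$ with $|E_n|\ge N_n^{r-\eta}$. To extract such an $\eta$, consider sequences of \emph{arbitrary} partition-wise $k$-stable $r$-partite $r$-graphs $H_n$ on parts of size $N_n\to\infty$ with $|E_n|\geq N_n^{r-\varepsilon_n}$ and $\varepsilon_n\to 0$. Form the ultraproduct $M$ in the language $\{V_1,\dots,V_r,E\}$; the sets $V_i$ are internal and, normalizing $\delta$ by $|V_1|$, each has $\delta(V_i)=1$ and $\delta(E)=r$. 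Partition-wise $k$-stability is first-order expressible and so passes to $M$. By \cref{prop:box} there are internal $B_i\subseteq V_i$ with $\delta(B_i)>0$ and $\prod B_i\subseteq E$. Since the $B_i$ are infinite, for each $t$ the internal box gives, by \L{}o\'s, $\mathcal U$-many $n$ in which $H_n$ contains $\Kpart rt$. Thus no sequence with $\varepsilon_n\to0$ can be $\Kpart rt$-free for a fixed $t$; but this only yields an $\eta$ depending on $t$, which is exactly the issue.

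To get uniformity, use that the box has positive dimension, not merely infinite size. Take $\eta$ to be a uniform lower bound for $\min_i\delta(B_i)$. Suppose for contradiction that there are $\eta_m\to 0$, integers $t_m$, and $H_m$ on parts of size $N_m$ which are partition-wise $k$-stable with $|E_m|\geq m N_m^{r-\eta_m}$, yet contain no $\Kpart r{t_m}$. Choose $N_m$ so large relative to $t_m$ that $t_m\le N_m^{\eta_m}$ as well; this is possible because for fixed $t_m$ the failure of every constant $C$ lets us take $N_m$ arbitrarily large. In the ultraproduct, $\delta(E)=r$, so \cref{prop:box} gives a box with $\delta(B_i)=c>0$, hence $|B_i|\geq N^{c/2}$ in the nonstandard sense. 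By \L{}o\'s, for $\mathcal U$-many $m$ we get $|B_{i,m}|\geq N_m^{c/2}>N_m^{\eta_m}\geq t_m$ with $\prod_i B_{i,m}\subseteq E_m$. Then $H_m$ contains $\Kpart r{t_m}$, a contradiction.

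It follows that for all sufficiently large $m$ the constant $C$ can be taken depending only on $t$, which gives \cref{thm:main}. Writing this out carefully requires two checks: that the quantifier negation legitimately produces such a diagonal sequence, with $N_m$ large relative to $t_m$; and that $\delta$ only needs the $V_i$ to have equal size. I expect the diagonal choice to be the delicate step. \cref{thm:intro} then follows by taking a random $r$-partition of $V$, or equivalently $V_i=V$ copies, which preserves $k$-stability and freeness up to constants.
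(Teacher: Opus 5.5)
Your third paragraph is essentially the paper's argument: negate with $\eta=1/m$, pick counterexamples $H_m$ with $N_m\to\infty$ and $t_m<N_m^{1/m}$, take the ultraproduct where $\delta(E)=r$, apply \cref{prop:box}, and use \L{}o\'s to pull back the positive-dimensional box, giving $|B_i^m|\ge N_m^{d/2}>t_m$ and a contradiction. The diagonal step you flag as delicate is handled in the paper by taking $C_m=m+t_m$, since $|E_m|\le N_m^r$ then forces $m+t_m<N_m^{1/m}$; also, the ``uniform lower bound for $\min_i\delta(B_i)$'' in your second paragraph is not needed, because the contradiction argument produces $\eta$ non-constructively.
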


As commented in the introduction, the proof of \Cref{thm:main}, is just pseudofinite yoga. We assume that the result fails, and build a sequence of counterexamples of increasing sizes, with enough hyperedges to guarantee that, in the ultraproduct, the hyperedge has full dimension. Then, \cref{prop:box} guarantees us a positive dimensional box in the hyperedge relation, which is enough to obtain a contradiction.

\begin{proof}
  Suppose towards contradiction that the result is false. Then, there are $r,k\in\mathbb{N}_{\geq 2}$ such that for all $m\in\mathbb{N}_{\geq 2}$, there is some $t_m\geq 2$ such that all $C\geq 0$, and, in particular, for $C_m = m+t_m$ there is some $N_m\in\mathbb{N}$ and a partition-wise $k$-stable $r$-graph $H_m=(V_1^m,\dots,V_r^m;E_m)$ with $|V_i^m|=N_m$ is $\Kpart r {t_m}$-free but $|E_m|>C_m N_m^{\, r-\frac{1}{m}}$. It is easy to see that $N_m\to\infty$ as $m\to\infty$. Indeed, trivially, $|E_m|\leq N_m^r$ and thus $C_m=m+t_m<N_m^{\, \frac{1}{m}}$. Note also (for future use) that $t_m<N_m^{\,\frac{1}{m}}$.

  Now, let $\mathcal{U}\in\beta\mathbb{N}\setminus\mathbb{N}$, and let $H =(V_1,\dots,V_r;E)= \prod_{m\in\mathbb{N}} H_m/\mathcal{U}$. By \L o\'s, $E$ is partition-wise stable. Clearly, $\delta(V_i)=1$ for $i=1,\dots,r$, so $\delta(V_1\times\cdots\times V_r)=r$. By construction, for all $m\in\mathbb{N}$, we have that $\frac{\log|E_m|}{\log N_m}>r-\frac{1}{m}$, and since $\frac{1}{m}\to 0$ as $m\to \infty$, we also have that $\delta(E) = r$.
  
  By \cref{prop:box}, there are internal
  $B_i=\prod_{n\in\mathbb{N}}B_{i}^n/\mathcal{U}$ with $B_1\times\cdots\times B_r\subseteq E$ and $\delta(B_i)>0$, for $i=1,\dots,r$. Let $d = \min\{\delta(B_i):i=1,\dots,r\}>0$. We are now ready to produce our finite counterexample. Let $J_0=\{m\in\mathbb{N}:B_1^m\times\cdots\times B_{r}^m\subseteq E_m\}\in\mathcal{U}$ (since the $B_i$ are internal). Since $\delta(B_i)\geq d$, we have that $J_i = \bigl\{m\in\mathbb{N}:|B_i^m|\geq N_m^{\, \frac{d}{2}}\bigr\}\in\mathcal U$. Since $\mathcal{U}$ is non-principal, we have that $J_{-1}=\{m\in\mathbb{N}:m>\frac{2}{d}\}\in\mathcal{U}$. All in all, $J=\bigcap_{i=-1}^rJ_i\in\mathcal{U}$ and is thus nonempty. 
  
  Let $m\in J$, and consider $H_m = (V_1^m,\dots,V_r^m;E_m)$. By construction, we have that $B_1^m\times\cdots\times B_r^m\subseteq E_m$ and $|B_i^m|\geq N_m^{\,\frac{d}{2}}> N_m^{\, \frac{1}{m}}$. But, as we noted (for future use) earlier $t_m<N_m^{\,\frac{1}{m}}$ and thus $|B_i^m|\geq t_m$, for each $i=1,\dots,r$. But, then, since $B_{1}^m\times\cdots\times B_{r}^m\subseteq E_m$, $H_m$ is not $\Kpart r{t_m}$-free, a contradiction.
\end{proof}

We can finally conclude with the proof of \cref{thm:intro}.

\begin{proof}[Proof of \cref{thm:intro}]
  Given an $r$-graph $(V,E)$ we construct an $r$-partite $r$-graph $(V_1,\dots,V_r;E')$ as follows. Set $V_i=V\times\{i\}$, for $i\in[r]$, and $((v_1,1),\ldots,(v_r,r)\bigr)\in E'$ if and only if $(v_1,\ldots,v_r)\in E$. Since $E$ is symmetric and partition-wise $k$-stable, so is $E'$. It is also clear that if $E$ is $\Kpart rt$-free, then so is $E'$.  Indeed, suppose that $B_1'\times\cdots\times B_r'\subseteq E'$, with $|B_i'|=t$, and let $B_i = \{v\in V:v_i\in B_i'\}$. Since $E$ is irreflexive, $B_i\cap B_j=\emptyset$ whenever $i\neq j$, and hence $B_1\times\cdots\times B_r\subseteq E$ is a copy of $\Kpart r t$, a contradiction. Finally, since $|E|=|E'|$, applying \cref{thm:main} to $(V_1,\dots,V_r;E')$ finishes the proof.
\end{proof}

\end{document}